\shorttitle{Moments of a Family of Self-Normalized Statistics} 
\newcommand{\R}{\mathbb{R}}
\newcommand{\lpar}{\left(}
\newcommand{\rpar}{\right)}
\newcommand{\lbr}{\left[}
\newcommand{\rbr}{\right]}
\newcommand{\pro}{\mathbb{P}}
\newcommand{\E}{\mathbb{E}}
\newcommand{\intzeroinf}{\int_0^{\infty}}
\newcommand{\widesim}[2][2]{\mathrel{\overset{#2}{\scalebox{#1}[1]{$\sim$}}}}
\newcommand\numberthis{\addtocounter{equation}{1}\tag{\theequation}}
     \newcommand{\VV}{\mathbb{V}}
\newcommand{\bx}{\mathbf{x}}   \newcommand{\bX}{\mathbf{X}}  
   \newcommand{\bY}{\mathbf{Y}}  
\newcommand{\bzero}{\mathbf{0}}
\begin{document}

\title{A Scalable Formula for the Moments of a Family of Self-Normalized Statistics}

\authorone[Columbia University]{Victor H. de~la~Pe\~na}
\authortwo[Columbia University]{Heyuan Yao}
\authorone[Columbia University]{Haolin Zou}

\addressone{Department of Statistics, Columbia University, New York, NY 10027, USA.} 
\curaddtwo{Department of Industrial Engineering and Management Sciences, Northwestern University, Evanston, IL, 60208, USA}
\emailone{hz2574@columbia.edu} 

\begin{abstract}
    Following the student t-statistic, normalization has been a widely used method in statistic and other disciplines including economics, ecology and machine learning. We focus on statistics taking the form of a ratio over (some power of) the sample mean, the probabilistic features of which remain unknown. We develop a unified formula for the moments of these self-normalized statistics with non-negative observations, yielding closed-form expressions for several important cases. Moreover, the complexity of our formula doesn't scale with the sample size $n$. Our theoretical findings, supported by extensive numerical experiments, reveal novel insights into their bias and variance, and we propose a debiasing method illustrated with applications such as the odds ratio, Gini coefficient and squared coefficient of variation.
\end{abstract}

\keywords{Bias; Change of Measure; Exponential Family; Gini Coefficient; Laplace Transform; Rare Events; Squared Coefficient of Variation; Variance}

\ams{62P20, 91B82}{60A10, 28A35}    

\section{Introduction}
\subsection{Literature Review}
Normalization is a ubiquitous technique that enables meaningful comparisons across datasets of different scales. Such quantities are often obtained by dividing the original quantity by a proxy of total amount, scale or variation, and thus expressed in the form of ratios and percentages, for example income per capita, alcohol by volume (ABV) and false discovery rates (FDR) in machine learning. In statistics, self-normalized statistics serve similar purposes to mitigate the effect of scale and variability in the observations, with a prominent example being the t-statistic (Student, 1908 \cite{student1908}, Gin\'e et al, 1997 \cite{gine1997student}), where the normalization is achieved by dividing the deviation of the sample mean by the standard deviation, thereby accounting for the intrinsic variability of the data. Broadly, normalization methods can be classified into two categories: those based on variability and those based on scale. The former includes the t-statistic, studentized residuals, and the Shapiro-Wilk statistic (Shapiro \& Wilk, 1965 \cite{shapiro-wilk1965}), while the latter encompasses measures such as the Gini coefficient (Gini, 1912 \cite{gini1912variabilita}) and the coefficient of variation (Pearson, 1898 \cite{pearson1898}), the latter of which is the focus of this paper. This approach is natural in contexts where the sample sum or sample average serve as the appropriate scaling factor. Despite the practical relevance of normalized statistics, their probabilistic properties, such as their bias and variance, remain insufficiently explored.

 Gin\'e et al (1997 \cite{gine1997student}) studied the condition under which the t-statistic is asymptotically normal. Besides the t-statistic, the study of moments of self-normalized statistics can date back to 1997 in an announcement of the Journal of the Academy of Science in Paris by Fuchs and Joffe (\cite{fuchs1997formule}), and later formalized in 2022 by Fuchs et al  \cite{fuchs2002expectation}, which provided a closed-form of 
\[
    \mathbb{E}\left[\frac{\sum_{i=1}^n X_i^2}{(\sum_{i=1}^n X_i)^2}\right]
\]
with i.i.d. observations $X_1, \cdots, X_n$.
This is also one of the earliest works that applied the identity 
\begin{equation}
    \label{equ: important identity}
    \frac{1}{x^\alpha} = \frac{1}{\Gamma(\alpha)} \intzeroinf \lambda^{\alpha-1}e^{-\lambda x} d \lambda
\end{equation}
which is also an important tool in this paper. The same trick was then applied in studying the higher moments and the limiting distribution of the aforementioned ratio ( \cite{albrecher2007asymptotic}, \cite{albrecher2008}), in Taylor's Law (\cite{brown2017taylor}), in the bias of odds ratio, relative risk and false discovery rate (\cite{pang2022bias}), in the unbiasedness of the Gini coefficient (\cite{banu2023}) and recently in the bias of Gini coefficient for Gamma mixture models (\cite{vila2025bias}).

The results mentioned above share a common characteristic: they all involve the study of ratios between a statistic and powers of the sample mean. However, there is not a unified and exact method to compute the moments of such statistics for all non-negative distributions. Our contribution fills the gap by developing a unified framework for deriving the moments of these self-normalized statistics, specifically those normalized by a power of the sample sum, for all non-negative distributions (both continuous and discrete). Our further analysis and numerical examples uncover novel patterns in the bias and variance of statistics such as the Gini coefficient and the squared coefficient of variation (SCV), and we propose a debiasing method that we illustrate through the Gini coefficient.

There has been another notable line of active study in self-normalization in stochastic processes, often in the context of sequential observations and online learning, where normalization is also carried out by a measure of variation (\cite{Pena-Pang-Exp2009, Pena-Lai-Shao-book, pena2007pseudo}), which have found extensive application in machine learning especially bandit problems and online-learning where observations arrive sequentially (\cite{Ramdas2021Time-Uniform, Abbasi2011Linear-Bandit, qin2014contextual}).

\subsection{Notations and conventions}
\label{ssec: notations}
We adopt conventional notations in probability and statistics. Specifically, we denote distribution functions by ordinary uppercase letters, such as $F$ and $G$, while scalar-valued statistics are represented by uppercase letters, including $T$, $V$, and $S$. Lowercase letters are used for probability density functions (e.g., $f, g$) as well as for other scalars and scalar-valued functions. Deterministic vectors are denoted by bold lowercase letters, such as $\bx \in \mathbb{R}^n$, whereas random vectors are represented by bold uppercase letters, such as $\bX$ and $\bY$. Parameters of distributions are indicated using lowercase Greek letters, for example, $\alpha, \beta,$ and $\gamma$.

The notation $\E_F$ and $\VV{\rm ar}_F$ refer to the expectation and variance (of the quantity after them) when the observations have distribution $F$.

We denote $\mathbb{N}_0$ as the set of non-negative integers and define $\mathbb{N} := \mathbb{N}_0 \backslash \{0\}$ as the set of positive integers. Similarly, we use $\mathbb{R}_+ := [0, \infty)$ to denote the set of non-negative real numbers. Throughout this work, we adopt the standard convention from (Lebesgue) measure theory that $0 \times \infty = 0$. For readers unfamiliar with this convention, it ensures that the measure (area) of a straight line in $\mathbb{R}^2$ is zero. In addition, the Gamma function $\Gamma(\alpha)$ is defined as:
$
    \label{equ: Gamma Function}
    \Gamma (\alpha)=\int_{0}^{\infty }t^{\alpha-1}e^{-t}\,dt
$
for any $\alpha>0$, which plays a central role in our theoretical framework.

\subsection{Structure of the paper}
The paper is organized as follows. Theoretical results are presented in Section \ref{sec: theoretical results}, where in Section \ref{ssec: main theorem} we present the main theorem for general ratio statistics and non-negative distributions, which is then applied to two specific statistics: the Gini coefficient (Section \ref{ssec: Consequence: Moments of Sample Gini Coefficient}) and the Squared Coefficient of Variation (Section \ref{ssec: Consequence: Moments of Sample Squared Coefficient of Variation}). 

In Section \ref{sec: applications} we provide further applications for the mean and variance of the two statistics above for selected distributions, as well as a novel debiasing method. To be more specific, in Section \ref{ssec: bias} we demonstrate the application of our formula in bias analysis: a novel method for proving the unbiasedness of the Gini coefficient for Gamma distribution (\cite{banu2023}) can be found in Section \ref{sssec: Proof Baydil's Theorem}, followed by the bias of $\hat{G}$ for Pareto distribution using numerical methods (Section \ref{sssec: pareto}), and then a novel debiasing method can be found in Section \ref{sssec: debiasing} with numerical experiments using Pareto distribution. In Section \ref{ssec: variance of Ghat under Gamma} we demonstrate the application of our method for calculating the variance of $\widehat{G}$ for Gamma distribution.

Finally, concluding remarks can be found in Section \ref{sec:discussion}. Additional numerical results for other distributions including Bernoulli, Lognormal, Negative Binomial, Inverse Gaussian and Poisson distributions can be found in the Appendix.

\section{Theoretical results}
\label{sec: theoretical results}

\subsection{Main theorem}
\label{ssec: main theorem}
In this section, we present the main theorem: a unified formula to calculate the moments of ratio statistics with the denominator being a power of the sample mean. Note that the method resembles that in Brown et al(2017 \cite{brown2017taylor}), but our formula allows for non-identical distributions with possible probability mass at zero ($\pro(X=0)>0$), and also arbitrary value for the ratio statistic when all observations are zero (where the ratio is not defined). To be more specific, consider a sample $\boldsymbol{X}:=(X_1,...,X_n)$ with $X_i$ being independent non-negative random variables with CDF $F_i(x)$ respectively. We are interested in ratio statistics with the following form:

\begin{align*}
    \label{eq:Self-normalized Statistics}\numberthis
    V(\bX)  := 
    \begin{cases}
        \frac{T(\bX)}{S_n^\alpha}&,\quad  \bX\neq \bzero\\
        r &,\quad  \bX = \bzero
    \end{cases}
\end{align*}
where 
\begin{itemize}[noitemsep, topsep=0pt]
    \item $T(\bX)$ is a statistic with finite expectation and $T(\bzero)=0$,
    \item $S_n:=\sum_{i=1}^n X_i$ is the sample sum, and 
    \item $\alpha>0,r>0$ are two constants.
\end{itemize}
\begin{remark}
    the value $r$ is introduced to ensure that the ratio remains well-defined when the denominator is zero. The choice of $r$ may depend on domain-specific knowledge or probabilistic considerations (see Section \ref{ssec: Bernoulli Law and the Explanation by Lorentz Curve}).
\end{remark}
\begin{remark}
    If a statistic $V=V(\bX)$ has the form \eqref{eq:Self-normalized Statistics}, its positive powers $V^k$ also has the same form with $T\leftarrow T^k, \alpha \leftarrow\alpha k$ and $r\leftarrow r^k$.
\end{remark}

The formulation in \eqref{eq:Self-normalized Statistics} encompasses many widely used statistics, including the Gini coefficient, the sample squared coefficient of variation (SCV), the Theil index, and the false discovery proportion (FDP), among others.

\begin{example}[Gini Coefficient]
    The (sample) Gini coefficient is a dimensionless (invariant in scale) measure of inequality:
    \[
        \widehat{G}(\bX) = \frac{1}{2(n-1)} \frac{ \sum\limits_{1\leq i\neq j \leq n}|X_i - X_j|}{S_n}.
    \]
\end{example}
\begin{example}[Squared coefficient of variation]
    The (sample) squared coefficient of variation (SCV) measures the spread of a sample:
    \[
        \widehat{c_V}^2 := \frac{n}{n-1}\frac{\sum\limits_{1\leq i< j \leq n}(X_i-X_j)^2}{S_n^2}.
    \]
\end{example}
\begin{example}[Theil Index]
    The Theil Index (also called Theil T index) is another measure of inequality:
    \[
        T_T(\bX) := \frac{\sum_{i=1}^n X_i \log(X_i/\bar{X}) }{S_n}
    \]
    where $\bar{X}$ is the sample mean.
    Note that this definition also coincides with the negative of Shannon's Diversity Index when the observations are counts of the occurrence of certain events.
\end{example}

Since the ratio \eqref{eq:Self-normalized Statistics} is not additive in general, calculating its expectation and higher moments usually involve $n$-layers of integrals. However, we provide a simplified formula of the moments of such kind of statistics. \footnote{Since our focus is not on the integrability itself, all expectations involved are assumed to be finite unless otherwise specified.}
Before presenting the theorem, several concepts need to be defined, which were also used in \cite{brown2017taylor} to study the Taylor's Law.
\begin{definition}[Laplace transform]
    For a univariate distribution with CDF $F$, its \textit{Laplace transform} is a function $L: [0,\infty)\to[0,1]$ defined as
\[
    \label{equ: Laplace Transform}
    L(\lambda) := \E_F\lpar e^{-\lambda X}\rpar  = \int_\R e^{-\lambda x} dF(x),\quad \lambda >0,\numberthis
\]
\end{definition}

\begin{definition}[Exponentially tilted family]
    For a univariate distribution with CDF $F$, the \textit{exponentially tilted distribution family} induced by $F$, or \textit{exponential tiltings} for short,  is a family of distributions $\{F^{(\lambda)}\}_{\lambda>0}$, defined by:
    \[
        dF^{(\lambda)}(x) = \frac{e^{-\lambda x}dF(x)}{L(\lambda)}.\label{eq:Titled}\numberthis
    \]
\end{definition}

Note that, when $F$ is continuous and has density $f$, $F^{(\lambda)}$ is also continuous and has density $f^{(\lambda)}(x) = f(x)e^{-\lambda x}/L(\lambda)$. With these concepts, we are ready to state the main theorem.

\begin{theorem}
    \label{thm: generalized Brown}
    Let $\bX = (X_1,...,X_n)$ be a random sample consisting of independent random variables $X_i \sim F_i(x)$, where $\{F_i(x)\}_{i=1}^n$ are CDFs on $[0,\infty)$ with Laplace transforms $L_i(\lambda)$. Let $V(\bX)$ have the form of \eqref{eq:Self-normalized Statistics},
    Then the expectation of $V(\bX)$ has the following formula:
    \[
        \label{eq:main theorem}\numberthis
        \E V(\bX)  = \frac{1}{\Gamma(\alpha)} \intzeroinf \lambda^{\alpha-1} \lbr\,\prod_{i=1}^n L_i(\lambda) \rbr \E_{F^{(\lambda)}}(T(\bX)) d\lambda + r\,\prod_{i=1}^n \pro(X_i = 0),
    \]
    where $F^{(\lambda)}:=\prod_{i=1}^n F_i^{(\lambda)}$ is the joint CDF of the exponentially tilted distributions.
\end{theorem}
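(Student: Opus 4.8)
The plan is to collapse the $n$-fold integral defining $\E V(\bX)$ into a single integral over the tilting parameter $\lambda$, using the identity \eqref{equ: important identity} to linearize the denominator $S_n^\alpha$. First I would split the expectation according to whether $\bX=\bzero$. On the event $\{\bX=\bzero\}$ the statistic equals the constant $r$ by definition, and since the $X_i$ are independent this event has probability $\prod_{i=1}^n\pro(X_i=0)$, which accounts for the second term of \eqref{eq:main theorem}. On the complementary event $\{\bX\neq\bzero\}$ all observations are non-negative and not all zero, so $S_n>0$ and the Gamma identity applies pointwise:
\[
    \frac{1}{S_n^\alpha} = \frac{1}{\Gamma(\alpha)} \intzeroinf \lambda^{\alpha-1} e^{-\lambda S_n}\, d\lambda .
\]

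Next I would insert this representation and interchange the $d\lambda$-integral with the expectation, obtaining
\[
    \E\lbr \frac{T(\bX)}{S_n^\alpha}\, \mathbf{1}_{\{\bX\neq\bzero\}} \rbr = \frac{1}{\Gamma(\alpha)} \intzeroinf \lambda^{\alpha-1}\, \E\lbr T(\bX)\, e^{-\lambda S_n}\, \mathbf{1}_{\{\bX\neq\bzero\}} \rbr d\lambda .
\]
Because $T(\bzero)=0$, the indicator $\mathbf{1}_{\{\bX\neq\bzero\}}$ may be dropped inside the inner expectation without changing its value, the integrand vanishing at $\bX=\bzero$ regardless. Writing $e^{-\lambda S_n}=\prod_{i=1}^n e^{-\lambda X_i}$ and comparing with the definition \eqref{eq:Titled} of the tilted family, the product measure $\prod_i e^{-\lambda x_i}\,dF_i(x_i)$ equals $\lbr \prod_i L_i(\lambda)\rbr dF^{(\lambda)}(\bx)$. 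Hence the inner expectation factors as $\lbr \prod_{i=1}^n L_i(\lambda)\rbr \E_{F^{(\lambda)}} T(\bX)$, and substituting this back produces exactly the first term of \eqref{eq:main theorem}.

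The step requiring the most care is the interchange of the integral over $\lambda$ with the expectation. I would justify it by Tonelli's theorem applied first to $|T(\bX)|$: the non-negativity of $\lambda^{\alpha-1}e^{-\lambda S_n}$ makes that swap unconditional, and under the standing assumption that the relevant moment is finite the resulting iterated integral is finite as well. Decomposing $T=T^+-T^-$ and invoking Fubini's theorem on each part then legitimizes the swap for signed statistics (relevant, e.g., to the Theil index, where $T$ need not be non-negative). A minor but essential point to record is that the Gamma identity is valid only for $S_n>0$, which is precisely why the event $\{\bX=\bzero\}$ must be isolated first and handled through the constant $r$; the convention $0\times\infty=0$ from Section \ref{ssec: notations} guarantees that this boundary contributes nothing to the integral term.
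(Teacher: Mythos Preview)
Your proposal is correct and follows essentially the same route as the paper: split on $\{\bX=\bzero\}$, apply the Gamma identity \eqref{equ: important identity} to $1/S_n^\alpha$ on the complement, swap the $\lambda$-integral with the expectation, and recognize the product $\prod_i e^{-\lambda x_i}\,dF_i(x_i)$ as $\bigl[\prod_i L_i(\lambda)\bigr]\,dF^{(\lambda)}(\bx)$. Your justification of the interchange via Tonelli on $|T|$ followed by Fubini on $T^{\pm}$ is in fact a bit more careful than the paper's, which simply invokes non-negativity of the integrand.
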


\begin{proof}
    In this proof we let $F=F(\bx)=\prod_{i=1}^n F_i(x_i)$ be the joint distribution, and let $\E_F$ refer to taking expectation under the joint distribution $F$. We then have
    \begin{align}
        \E_F  V(\bX)
        & = \E_F \lbr r \boldsymbol{1}_{\{\bX=\bzero\}} \rbr + \E_F \lbr \frac{T(\bX)}{S_n^\alpha} \boldsymbol{1}_{\{\bX \neq \bzero\}} \rbr \nonumber \\
        & = r\,\prod_{i=1}^n \pro(X_i = 0) + \E_F\lbr \frac{T(\bX)}{S_n^\alpha} \boldsymbol{1}_{\{S_n >0\}} \rbr. \nonumber
    \end{align}

    It remains to show $ \E_F \lbr \frac{T(\bX)}{S_n^\alpha} \boldsymbol{1}_{\{S_n >0\}} \rbr = \frac{1}{\Gamma(\alpha)} \intzeroinf \lambda^{\alpha-1} \prod_{i=1}^n L_i(\lambda) \E_{F^{(\lambda)}}(T(\bX)) d\lambda$. 
    The main technique is the following gamma density trick: for $\alpha, x >0$ we have
    \[
        1=  \frac{x^\alpha}{\Gamma(\alpha)} \intzeroinf \lambda^{\alpha-1}e^{-\lambda x} d \lambda.
    \]
    because the right hand side is the density of a $Gamma(\alpha, x)$ distribution. By rearranging the terms we have
    \[
        \label{eq:Gamma Density Rewrite}
        \frac{1}{x^\alpha} = \frac{1}{\Gamma(\alpha)} \intzeroinf \lambda^{\alpha-1}e^{-\lambda x} d \lambda.
    \]
    Replacing $x$ by $S_n$ and multiplying both sides by $T(\bX)$ we have that, for $S_n>0$:
    \[
        \frac{T(\bX)}{S_n^\alpha}  = \frac{1}{\Gamma(\alpha)} \intzeroinf \lambda^{\alpha-1}T(\bX)e^{-\lambda S_n} d \lambda.
    \]
    Notice that the right hand side is $0$ when $S_n=0$, so we can rewrite it in a compact way to include the $S_n=0$ case:
    \[
        V(\bX) \boldsymbol{1}_{\{S_n>0\}}  = \frac{1}{\Gamma(\alpha)} \intzeroinf \lambda^{\alpha-1}T(\bX)e^{-\lambda S_n} d \lambda.
    \]
    Taking expectation to both sides and applying Fubini's theorem (because the integrands are non-negative) we have
    \begin{align*}
    \E_F &\lbr V(\bX) \boldsymbol{1}_{\{S_n >0\}} \rbr 
     = \E_F \lbr \frac{1}{\Gamma(\alpha)} \intzeroinf \lambda^{\alpha-1}T(\bX)e^{-\lambda S_n} \rbr \nonumber\\
     & = \frac{1}{{\Gamma(\alpha)}} \intzeroinf \E_F \lbr T(\bX) \lambda^{\alpha-1}e^{-\lambda S_n}  \rbr d \lambda \nonumber \\
    & = \frac{1}{{\Gamma(\alpha)}} \intzeroinf \int_{\R^n} T(x_1,...,x_n) \lambda^{\alpha-1}  e^{-\lambda (x_1+...+x_n)}   d F(x_1)...d F(x_n)d \lambda \\
    & = \frac{1}{{\Gamma(\alpha)}} \intzeroinf \int_{\R^n} 
    T(x_1,...,x_n) \lambda^{\alpha-1}  \prod_i^n L_i(\lambda)  d F^{(\lambda)}({x}_1)... d F^{(\lambda)}({x}_n) d \lambda\\
    & = \frac{1}{\Gamma(\alpha)} \intzeroinf \lambda^{\alpha-1} \prod_i^n L_i(\lambda) \E_{F^{(\lambda)}}(T(\bX)) d\lambda ,
    \end{align*}     
where the penultimate line uses the definition $dF^{(\lambda)}_i (x)= \frac{e^{- \lambda x}dF_i(x)}{L (\lambda)}$ $1\leq i \leq n$.
\end{proof}

The following Corollary is a direct application of Theorem \ref{thm: generalized Brown} to the case of i.i.d. observations.

\begin{corollary}
\label{col:generalized Brown_iid}    
    Under the same setting as in Theorem \ref{thm: generalized Brown} with the additional assumption that $F_1(x) = ...=F_n(x)\equiv F(x)$, we have
    \[
        \label{eq:main theorem_iid}
        \E_{F} V(\bX)  = \frac{1}{\Gamma(\alpha)} \intzeroinf \lambda^{\alpha-1} L^n(\lambda)\, \E_{F^{(\lambda)}}(T(\bX)) \,d\lambda \,+\, r\, \pro^n(X_1 = 0).\numberthis
    \]
\end{corollary}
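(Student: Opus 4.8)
The plan is to obtain the corollary as an immediate specialization of Theorem~\ref{thm: generalized Brown}, exploiting the fact that under identical marginals every product appearing in \eqref{eq:main theorem} collapses into a power. Since the theorem already handles arbitrary independent (possibly non-identical) $F_i$, no new probabilistic machinery is needed; the entire argument is a bookkeeping reduction.

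First I would invoke \eqref{eq:main theorem} verbatim, which is valid in particular when $F_1 = \cdots = F_n \equiv F$. The key observation is that the Laplace transform $L_i(\lambda) = \E_{F_i}(e^{-\lambda X_i})$ depends only on the marginal law $F_i$, so the common-distribution hypothesis forces $L_1(\lambda) = \cdots = L_n(\lambda) = L(\lambda)$ for every $\lambda > 0$. Consequently the factor $\prod_{i=1}^n L_i(\lambda)$ in the integrand becomes $L^n(\lambda)$. By the same reasoning $\prod_{i=1}^n \pro(X_i = 0) = \pro^n(X_1 = 0)$, which converts the boundary term $r \prod_i \pro(X_i = 0)$ into $r\,\pro^n(X_1 = 0)$.

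Next I would verify that the tilted expectation $\E_{F^{(\lambda)}}(T(\bX))$ is unchanged in form. Because the tilting $dF_i^{(\lambda)}(x) = e^{-\lambda x}\,dF_i(x)/L_i(\lambda)$ acts coordinatewise and depends only on the pair $(F_i,\lambda)$, identical marginals produce identical tilted marginals; hence the joint tilted law $F^{(\lambda)} = \prod_{i=1}^n F_i^{(\lambda)}$ is the $n$-fold product of a single common tilted distribution, and the expectation $\E_{F^{(\lambda)}}(T(\bX))$ retains exactly the shape it has in \eqref{eq:main theorem}. Substituting these three simplifications into \eqref{eq:main theorem} yields \eqref{eq:main theorem_iid}.

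The main obstacle here is essentially nonexistent, since the result is a mechanical substitution rather than a genuine theorem. The only point that merits a moment's care is confirming that exponential tilting respects the product structure — that is, tilting the joint measure by $e^{-\lambda S_n}$ coincides with independently tilting each marginal by $e^{-\lambda x_i}$ — which follows directly from the factorization $e^{-\lambda S_n} = \prod_{i=1}^n e^{-\lambda X_i}$ already exploited in the proof of Theorem~\ref{thm: generalized Brown}.
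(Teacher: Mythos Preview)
Your proposal is correct and matches the paper's approach exactly: the paper states this corollary as ``a direct application of Theorem~\ref{thm: generalized Brown} to the case of i.i.d.\ observations'' and gives no separate proof, so your explicit verification that $\prod_i L_i(\lambda) = L^n(\lambda)$, $\prod_i \pro(X_i=0) = \pro^n(X_1=0)$, and that the tilted joint law remains the product of identical tilted marginals is precisely the intended (trivial) specialization.
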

The Proposition 1 of \cite{brown2017taylor} corresponds to the special case $r=0$ in the above corollary.
\begin{remark}
    Theorem \ref{thm: generalized Brown} and Corollary \ref{col:generalized Brown_iid} provide scalable formulae, the complexity of which do not depend on the sample size $n$. Note that this is usually not the case, as the expectation usually involves an n-dimensional integral unless the statistic itself has certain separability property, e.g, when it is a summation like a U-statistics, which is clearly not the case for a ratio statistic. But Theorem \ref{thm: generalized Brown} and Corollary \ref{col:generalized Brown_iid} simplifies the expectation to three components:
    \begin{itemize}[noitemsep, topsep=0pt]
        \item the the Laplace transform $L(\lambda)$,
        \item the expectation $\E_{F^{(\lambda)}}(T(\bX))$, and
        \item the final integral over $\lambda$,
    \end{itemize}  
    For many commonly used distributions, their Laplace transforms are either well-known or can be calculated easily. Additional properties of $T(\bX)$ can also facilitate the computation of $\E_{F^{(\lambda)}}(T(\bX))$,  e.g. when $T$ is a U-statistic:
    \[
        T(\bX)={n \choose k}^{-1}\sum_{1\leq i_1<\cdots<i_k\leq n} h(X_{i_1},\cdots,X_{i_k})
    \]
    for some kernel function $h: \R^k\to \R$, in which case we have
    \[
        \E_{F^{(\lambda)}}T(\bX) = \E_{F^{(\lambda)}}h(X_1,\cdots,X_k).
    \]
    and it is easier to calculate when $k$ is significantly smaller than n. When $k$ is fixed, the complexity of the formula doesn't scale with $n$ as it appears only as exponents of $L(\lambda)$ and $\pro(X_1=0)$.
    
    Lastly, for some distributions in the exponential family, the tilted distribution belongs to the original distribution family or a known family of distributions, with examples including Poisson, Gamma, and Binomial distributions etc, in which case $\E_{F^{(\lambda)}}T(\bX)$ has a closed form formula if $\E \,T(\bX)$ does. For example, when $X_i \widesim{i.i.d.} Poisson(\mu)$ with $dF_{\mu}(x)=\frac{\mu^x}{x!}e^{-\mu}$, the exponential tilted distribution is $dF^{(\lambda)}_{\mu}(x)\propto \frac{(\mu e^{-\lambda})^x}{x!}$ and turns out to be the $Poisson(\mu e^\lambda)$ distribution.
\end{remark}

In the following two subsections, we demonstrate the applicability of Theorem \ref{thm: generalized Brown} and Corollary \ref{col:generalized Brown_iid} by computing the bias and variance of the Gini coefficient and the squared coefficient of variation (SCV).

\subsection{Moments of the Gini coefficient}
\label{ssec: Consequence: Moments of Sample Gini Coefficient}

Introduced in 1912 (\cite{gini1912variabilita}), the Gini coefficient has been widely used as a dimensionless measure (invariant to the unit of measurement) of disparity in numerous fields including economics (\cite{chen2017}), demography (\cite{cohen2021}) and agriculture (\cite{sadras2004}), etc.
Among many equivalent definitions, we adopt the following version for the benefit of computation: 
\begin{equation}
    G = G(F) = \frac{\E_F |X_1-X_2|}{2\,\E_F X_1}.
    \label{Def of population Gini}
\end{equation}
where $X_1,X_2$ are two i.i.d. non-negative random variables from the same distribution $F$ of interest.

For a sample $\bX = (X_1 ,\cdots, X_n)$ drawn independently from $F$, the sample Gini coefficient can be defined as\footnote{We acknowledge an alternative definition that replaces the $n(n-1)$ factor with $n^2$, which corresponds to twice the area under the Lorentz curve (Woytinsky \cite{woytinsky1943earnings}). However, the difference in scaling constants is not substantial. From a statistical perspective, the version adopted in this work is more favorable and exhibits lower bias (see   Deltas \cite{deltas2003small}).
}

\[
\widehat{G}(\bX) = \frac{\frac{1}{n(n-1)} \sum\limits_{1\leq i\neq j \leq n}|X_i - X_j|}{2 \bar{X}_n}.
\label{Def of sample Gini HAT}\numberthis
\]
where $\bar{X}_n:= n^{-1}\sum_{i=1}^n X_i$ is the sample mean. This estimator is known to be consistent (Theorem A on pp190, \cite{serfling2009}). Moreover, the asymptotic distribution of $\widehat{G}$ for distributions with finite variance is known (Yitzhaki \& Schechtman, 2013 \cite{yitzhaki2013}). Fontanari et al (2018 \cite{fontanari2018}) further established the asymptotic distribution of $\widehat{G}$ for stable distributions with infinite variance.

For the small sample behavior of the Gini coefficient, the work  \cite{fontanari2018} suggests the presence of a downward bias of $\widehat{G}$ for heavy-tailed distributions. However, to evaluate the bias for finite samples, and more generally the moments $\E \widehat{G}^k$, one needs to perform an integral on $\R^n$ which is prohibitive for large $n$. Define the Gini Mean Difference (GMD) of $F$: 
\[
    GMD(F):= \int_{\R_+^2}|x_1-x_2|dF(x_1)dF(x_2) = \E_F |X_1 - X_2|
    \label{equ: GMD}\numberthis,
\]
then $G$ can be re-expressed in terms of $GMD$ as $G(F) = \frac{GMD(F)}{2\mu_F}$ where $\mu_F:=\E_F X_1$.

We now propose an exact formula for $\E_F(\widehat{G})$ and the ratio $R:=\frac{\E_F (\widehat{G})}{G}$ using Corollary \ref{col:generalized Brown_iid}, which reduces the n-layer integral to a triple integral. 

\begin{theorem}
\label{thm: Gini Expectation and Ratio}
    For a non-negative sample $\bX=(X_1,...,X_n) \widesim{i.i.d.} F$ with $n\geq 2$, we have
    \begin{enumerate}[label=(\roman*),ref=(\roman*)]
        \item 
        \begin{equation}
            \label{equ: Gini Expectation}
            \E_F \widehat{G} = \frac{n}{2} \intzeroinf GMD(F^{(\lambda)}) L^n(\lambda) \;  d \lambda + r\,\pro^n(X_1 = 0).
        \end{equation}
        where  $F^{(\lambda)}$ is the exponentially tilted distribution of $F$ defined in \eqref{eq:Titled}, $GMD(F^{(\lambda)})$ is the Gini mean difference of $F^{(\lambda)}$ and $L(\lambda)$ is the Laplace transform of $F$.
        \item Let $g(\lambda):=GMD(F^{(\lambda)})/GMD(F)$, then we have
        \begin{equation}
            \label{equ: R for Gini}
            R  :=\frac{\E_F\; \widehat{G}}{G}= n\mu_F \intzeroinf g(\lambda) L^n(\lambda) d\lambda + \frac{r\,\pro^n(X_1 = 0) }{G},
        \end{equation}
        where $\mu_F=\E_F(X_1)$. 
    \end{enumerate}
\end{theorem}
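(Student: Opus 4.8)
The plan is to recognize $\widehat{G}$ as a special instance of the statistic $V(\bX)$ in \eqref{eq:Self-normalized Statistics} and then invoke Corollary \ref{col:generalized Brown_iid}. Rewriting $\bar{X}_n = S_n/n$ in the denominator of \eqref{Def of sample Gini HAT}, I get $\widehat{G}(\bX) = T(\bX)/S_n$ on the event $\bX \neq \bzero$, where $T(\bX) := \frac{1}{2(n-1)}\sum_{1\le i\neq j\le n}|X_i - X_j|$. This identifies the exponent as $\alpha = 1$, and one checks immediately that $T(\bzero) = 0$ and that $T$ has finite expectation, so the hypotheses of the corollary are met.

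With $\alpha = 1$ the prefactor collapses, since $\Gamma(1) = 1$ and $\lambda^{\alpha-1} = \lambda^0 = 1$, so \eqref{eq:main theorem_iid} reduces to
\[
\E_F \widehat{G} = \intzeroinf L^n(\lambda)\, \E_{F^{(\lambda)}}\bigl(T(\bX)\bigr)\, d\lambda + r\,\pro^n(X_1 = 0).
\]
The key step is evaluating $\E_{F^{(\lambda)}}(T(\bX))$. Since $T$ is a constant multiple of a $U$-statistic with the two-argument kernel $|x-y|$, I would expand by linearity of expectation: the sum $\sum_{1\le i\neq j\le n}|X_i - X_j|$ consists of $n(n-1)$ ordered pairs, and under the i.i.d.\ tilted law $F^{(\lambda)}$ each term has expectation $\E_{F^{(\lambda)}}|X_1 - X_2| = GMD(F^{(\lambda)})$. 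Hence $\E_{F^{(\lambda)}}(T(\bX)) = \frac{n(n-1)}{2(n-1)}\,GMD(F^{(\lambda)}) = \frac{n}{2}\,GMD(F^{(\lambda)})$, and substituting this into the display above yields \eqref{equ: Gini Expectation}, proving part (i).

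For part (ii), I would simply divide the result of part (i) by the population Gini coefficient $G = GMD(F)/(2\mu_F)$. Multiplying through by $2\mu_F/GMD(F)$ turns the $\frac{n}{2}$ prefactor together with the $GMD(F^{(\lambda)})$ inside the integral into $n\mu_F$ times the normalized ratio $g(\lambda) = GMD(F^{(\lambda)})/GMD(F)$, while the boundary term becomes $r\,\pro^n(X_1=0)/G$; this is exactly \eqref{equ: R for Gini}.

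There is no serious analytic obstacle here: finiteness of all expectations is assumed, and the interchange of expectation and integral has already been justified inside the proof of Theorem \ref{thm: generalized Brown}. The only point demanding genuine care is the bookkeeping of the combinatorial constant, namely tracking whether the sum runs over ordered or unordered pairs, so that the factor $\frac{1}{2(n-1)}$ combines with the $n(n-1)$ terms to produce exactly $\frac{n}{2}$ rather than a spurious factor of $n$ or $\frac{n}{4}$.
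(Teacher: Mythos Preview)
Your proposal is correct and follows essentially the same route as the paper: apply the i.i.d.\ case of the main theorem with $\alpha=1$, reduce $\E_{F^{(\lambda)}}T(\bX)$ to $\frac{n}{2}\,GMD(F^{(\lambda)})$ via the $n(n-1)$ ordered pairs, and then divide through by $G=GMD(F)/(2\mu_F)$ for part (ii). The only cosmetic difference is that the paper keeps the factor $\frac{1}{2(n-1)}$ outside $T$ rather than absorbing it.
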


\begin{remark}
    Part (ii) of the theorem could facilitate computation when the function $g(\lambda)$ can be obtained without calculating $\E_F |X_1-X_2|$ first, for example for Gamma distribution, $g(\lambda) = (1+\lambda)^{-1}$ (see Section \ref{sssec: Proof Baydil's Theorem}).
\end{remark}

\begin{proof}
    By Theorem \ref{thm: generalized Brown}, we have, with $\alpha =1$, that
    \begin{align}
        \E_F \widehat{G} & =   \frac{1}{2(n-1)} \E_F \lbr \frac{\sum\limits_{1\leq i \neq j \leq n} |X_i - X_j|}{S_n}\rbr \nonumber \\
        & = \frac{1}{2(n-1)} \intzeroinf \E_{F^{(\lambda)}} \lpar \sum\limits_{1\leq i \neq j \leq n} |X_i - X_j| \rpar  L^n(\lambda)    d \lambda +  r\,\pro^n(X_1 = 0)\nonumber  \\
        & = \frac{n}{2} \intzeroinf GMD(F^{(\lambda)}) L^n(\lambda)    d \lambda +  r\,\pro^n(X_1 = 0) \label{equ: Gini Expectation proof 1},
    \end{align}
    where (\ref{equ: Gini Expectation proof 1}) uses the fact that \[\E \sum\limits_{1\leq i \neq j \leq n} |X_i - X_j| = n(n-1)\E_{F^{(\lambda)}} |X_1 - X_2| =n(n-1)GMD(F^{(\lambda)}).\] This concludes the proof of Part (i).
    
    For part (ii), by definition of $g(\lambda)$ we have $GMD(F^{(\lambda)})= g(\lambda) GMD(F)$, we can move $GMD(F)$ out of the integral:
    \begin{equation*}
        \E_F \widehat{G} = \frac{n}{2} GMD(F) \intzeroinf g(\lambda) L^n(\lambda) d\lambda + r\,\pro^n(X_1 = 0).
    \end{equation*}
    Hence
    \begin{equation*}
        R  = \frac{\E_F \widehat{G}}{G}  = n\E(X_1) \intzeroinf g(\lambda) L^n(\lambda) d\lambda + G^{-1}r\,\pro^n(X_1=0).
    \end{equation*}   
    This concludes the proof of part (ii).   
\end{proof}

In the next theorem we provide a formula for the second moment of $\widehat{G}$, and higher order moments can be calculated in a similar fashion. First we define the following quantities to simplify the expression.

\begin{definition}
\label{def: xi}
    Given $X_1, X_2, X_3,X_4 \widesim{i.i.d.} F$, we define the following terms:
    \begin{align}
        \xi_0(F) &= \E_F |X_1-X_2||X_3-X_4| =[GMD(F)]^2 \\
        \xi_1(F) &= \E_F |X_1-X_2||X_1-X_3| \\
        \xi_2(F) &= \E_F |X_1-X_2||X_1-X_2| = 2 \VV {\rm ar}(X_1).
    \end{align}
\end{definition}

\newpage

\begin{theorem}
\label{thm: Gini 2nd moment}
    With the same assumptions in Theorem \ref{thm: Gini Expectation and Ratio}, we have that
    \begin{enumerate}[label=(\roman*),ref=(\roman*)]
        \item
        \[
            \label{equ: 2nd moment of Sample Gini}
            \begin{aligned}
                E_F \widehat{G}^2&= \frac{1}{4(n-1)^2}  \intzeroinf \lambda   [ 2n(n-1) \xi_2(F^{(\lambda)}) + 4n(n-1)(n-2) \xi_1(F^{(\lambda)})   \\ & +n(n-1)(n-2)(n-3)\xi_0(F^{(\lambda)}) ]    L^n(\lambda)    d \lambda + r^2\pro^{n}(X_1 = 0).
            \end{aligned}
        \]
        \item if we define
        \[
        \label{equ: h_c in Gini 2nd moments}
            h_i(\lambda) = \xi_i(F^{(\lambda)})/\xi_i(F),        i = 0, 1, 2,
        \]
         we can rewrite the second moment as:
        \[
            \begin{aligned}
                \E_F \widehat{G}^2 =& \frac{n\xi_2(F)}{2(n-1)}  \intzeroinf \lambda h_2(\lambda)L^n(\lambda)  d\lambda + \frac{n(n-2)}{n-1}  \xi_1(F) \intzeroinf \lambda h_1(\lambda)L^n(\lambda)  d\lambda\\ 
                &+ \frac{n(n-2)(n-3)}{4(n-1)} \xi_0(F)\intzeroinf \lambda h_0(\lambda)L^n(\lambda)  d\lambda  +r^2\pro^{n}(X_1 = 0).
            \end{aligned}
        \]
    \end{enumerate}
\end{theorem}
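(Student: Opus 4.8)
The plan is to apply the Remark 2 observation that positive powers of a statistic of the form \eqref{eq:Self-normalized Statistics} retain the same form, so that the second moment $\E_F \widehat{G}^2$ can be computed by a single application of Corollary \ref{col:generalized Brown_iid}. Concretely, since
\[
\widehat{G}(\bX) = \frac{T(\bX)}{S_n}, \qquad T(\bX) = \frac{1}{2(n-1)}\sum_{1\le i\ne j\le n}|X_i - X_j|,
\]
we have $\widehat{G}^2 = T(\bX)^2/S_n^2$, which is of the form \eqref{eq:Self-normalized Statistics} with $T \leftarrow T^2$, $\alpha \leftarrow 2$, and $r \leftarrow r^2$. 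Applying Corollary \ref{col:generalized Brown_iid} with $\alpha = 2$ (so that $\lambda^{\alpha-1} = \lambda$ and $\Gamma(2) = 1$) immediately yields
\[
\E_F \widehat{G}^2 = \intzeroinf \lambda\, L^n(\lambda)\, \E_{F^{(\lambda)}}\!\lpar T(\bX)^2 \rpar d\lambda + r^2\,\pro^n(X_1 = 0).
\]
This reduces the problem to evaluating the tilted expectation $\E_{F^{(\lambda)}}(T^2)$, after which part (ii) follows purely algebraically.

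The main work is therefore the combinatorial expansion of $\E_{F^{(\lambda)}}(T^2)$. Writing $T = \frac{1}{2(n-1)}\sum_{i\ne j}|X_i-X_j|$, we get
\[
T^2 = \frac{1}{4(n-1)^2} \sum_{i\ne j}\sum_{k\ne l} |X_i - X_j|\,|X_k - X_l|,
\]
and the plan is to partition the index pairs $\{(i,j),(k,l)\}$ according to how many distinct indices they involve. Because the $X_i$ are i.i.d., each term's tilted expectation depends only on the pattern of coincidences among $\{i,j,k,l\}$, and these patterns collapse exactly to the three quantities in Definition \ref{def: xi}: terms with all four indices distinct give $\xi_0(F^{(\lambda)})$, terms sharing exactly one index give $\xi_1(F^{(\lambda)})$, and terms with $\{i,j\}=\{k,l\}$ (as unordered pairs) give $\xi_2(F^{(\lambda)})$. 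I would then count the multiplicity of each pattern under the ordered-pair summation $\sum_{i\ne j}\sum_{k\ne l}$: this is the one genuinely delicate step. The counts are the standard falling-factorial counts, namely $2n(n-1)$ ordered configurations yielding $\xi_2$, $4n(n-1)(n-2)$ yielding $\xi_1$, and $n(n-1)(n-2)(n-3)$ yielding $\xi_0$, which reproduces the bracketed expression in part (i).

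For part (ii), I would simply substitute the definitions $h_i(\lambda) = \xi_i(F^{(\lambda)})/\xi_i(F)$, pull each constant $\xi_i(F)$ outside its integral, and distribute the prefactor $\frac{1}{4(n-1)^2}$ together with the combinatorial coefficients; collecting $n(n-1)$ against the $(n-1)^2$ denominator gives the stated coefficients $\frac{n\xi_2(F)}{2(n-1)}$, $\frac{n(n-2)}{n-1}\xi_1(F)$, and $\frac{n(n-2)(n-3)}{4(n-1)}\xi_0(F)$.

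The hard part will be verifying the three multiplicity counts correctly, in particular ensuring that the one-shared-index case is counted without double-counting or omission: for ordered pairs $(i,j)$ and $(k,l)$ there are several sub-configurations (the shared index can be $i=k$, $i=l$, $j=k$, or $j=l$), and care is needed to confirm these aggregate to the factor $4n(n-1)(n-2)$ rather than to a different constant. Everything else—the reduction via Corollary \ref{col:generalized Brown_iid}, the Fubini interchange (justified by nonnegativity exactly as in the proof of Theorem \ref{thm: generalized Brown}), and the algebra of part (ii)—is routine.
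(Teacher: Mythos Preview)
Your proposal is correct and follows essentially the same approach as the paper: apply Corollary \ref{col:generalized Brown_iid} with $\alpha=2$ to reduce to computing $\E_{F^{(\lambda)}}(T^2)$, expand the double sum $\bigl(\sum_{i\ne j}|X_i-X_j|\bigr)^2$ by partitioning ordered index quadruples into the three coincidence patterns (yielding the counts $2n(n-1)$, $4n(n-1)(n-2)$, $n(n-1)(n-2)(n-3)$), and then obtain part (ii) by the straightforward substitution $\xi_i(F^{(\lambda)})=h_i(\lambda)\xi_i(F)$ and simplification of the prefactors. The paper records the combinatorial identity \eqref{Open square} before invoking the main theorem rather than after, but the content and the counts are identical.
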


\begin{proof}
    First notice that    
        \begin{align*}
            &\E_F ( \sum\limits_{1\leq i \neq j \leq n} |X_i - X_j| ) ^2  \\
            &= 2n(n-1)\xi_2(F)+ 4n(n-1)(n-2) \xi_1(F)+ n(n-1)(n-2)(n-3)\xi_0(F)  
            \label{Open square} \numberthis
        \end{align*}
        
    Then, by Theorem \ref{thm: generalized Brown} with $\alpha=2$, we can compute $\E(\widehat{G}^2)$ as
    \begin{align}
        \E_F \widehat{G}^2 & =  \frac{1}{4(n-1)^2} \E_F  \frac{\lpar \sum\limits_{1\leq i \neq j \leq n} |X_i - X_j|\rpar ^2}{S_n^2} \nonumber \\
        & = \frac{1}{4(n-1)^2}  \intzeroinf \lambda \E_{F^{(\lambda)}} \lpar \sum\limits_{1\leq i \neq j \leq n} |X_i-X_j| \rpar^2 L^n(\lambda)    d \lambda  \nonumber  +r^2\pro^{n}(X_1 = 0)\\
        & = \frac{1}{4(n-1)^2}  \intzeroinf \lambda   [ 2n(n-1) \xi_2(F^{(\lambda)}) + 4n(n-1)(n-2) \xi_1(F^{(\lambda)})  \nonumber  \\ 
        &                   + n(n-1)(n-2)(n-3)\xi_0(F^{(\lambda)}) ]    L^n(\lambda)    d \lambda +r^2\pro^{n}(X_1 = 0)  \nonumber\\
        & = \frac{n}{2(n-1)} \xi_2(F) \intzeroinf \lambda h_2(\lambda)L^n(\lambda)  d\lambda +  \frac{n(n-2)}{n-1} \xi_1(F) \intzeroinf \lambda h_1(\lambda)L^n(\lambda)  d\lambda  \nonumber  \\ 
        & + \frac{n(n-2)(n-3)}{4(n-1)} \xi_0(F)\intzeroinf \lambda h_0(\lambda)L^n(\lambda)  d\lambda+r^2\pro^{n}(X_1 = 0). \nonumber
    \end{align}
\end{proof}

\subsection{Moments of SCV}
\label{ssec: Consequence: Moments of Sample Squared Coefficient of Variation}
The squared coefficient of variation (SCV) is another measure of dispersion of a probability distribution, defined as
\[
c_V^2 = \frac{\VV {\rm ar}(X)}{\E^2(X)}
\]
for any random variable with $\E(X)\neq 0$. For non-negative i.i.d. random variables $\bX = (X_1,...,X_n)$, let $\hat{\sigma}^2$ denote the sample variance:
\begin{equation}
\label{equ: hat_sigma^2}
    \hat{\sigma}^2 =\frac{1}{n-1}\sum_{i=1}^n (X_i-\bar{X}_n)^2 = \frac{1}{n(n-1)}\sum\limits_{1\leq i < j \leq n}(X_i-X_j)^2 .
\end{equation}
A natural estimator of SCV, denoted by $\widehat{c_V}^2$, is defined as
\begin{equation}
\label{equ: hat c_V^2}
    \widehat{c_V}^2 := \frac{\hat{\sigma}^2}{\bar{X}_n^2} = \frac{n}{n-1}\frac{\sum\limits_{1\leq i< j \leq n}(X_i-X_j)^2}{S_n^2}.
\end{equation}
To avoid dividing-by-zero error, we make additional definition that $\widehat{c_V}^2=r$ when $X_1=\cdots=X_n=0$.
The theorem below provides exact formulae for $\E_F(\widehat{c_V}^2)$ and the ratio $R_V := \E_F(\widehat{c_V}^2) / c_V^2$:

\begin{theorem}
\label{thm: E(SCV)}
    Given non-negative random variables $X_1,...,X_n \widesim{i.i.d.} F$ with the corresponding Laplace transform $L(\lambda)$, we have
    \begin{enumerate}[label=(\roman*),ref=(\roman*)]
        \item 
        \begin{equation}
        \label{equ: E_widehat_c_V^2)}
            \E_F (\widehat{c_V}^2 ) =  r\,\pro^n (X_1=0)+n^2\intzeroinf \lambda \, \VV {\rm ar}_{F^{(\lambda)}}(X_1)\,L^n(\lambda)\, d \lambda 
        \end{equation}
        and 
        \begin{equation}
        \label{equ: hatR_V}
            R_V:= \frac{\E_F (\widehat{c_V}^2 )}{c_V^2} = \frac{r\,\pro^n (X_1=0)}{c_V^2}+ n^2 \frac{\E_F^2(X_1)}{\VV {\rm ar}_F(X_1)}\intzeroinf \lambda \, \VV {\rm ar}_{F^{(\lambda)}}(X_1) \, L^n(\lambda) \,d \lambda.
        \end{equation}

        \item  with $g(\lambda) := \VV {\rm ar}_{F^{(\lambda)}}(X_1)/\VV {\rm ar}(X_1)$, we have
        \begin{equation}
        \label{equ: simplified hatR_V}
            R_V = \frac{r\, \pro^n (X_1=0)}{c_V^2}+ n^2\, \E_F^2(X_1)\intzeroinf \lambda\, g(\lambda) \,L^n(\lambda) \,d \lambda.
        \end{equation}
    \end{enumerate}
\end{theorem}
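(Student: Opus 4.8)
The plan is to recognize Theorem~\ref{thm: E(SCV)} as a direct instance of Corollary~\ref{col:generalized Brown_iid}, so that the entire problem reduces to evaluating a single tilted expectation. Writing $\widehat{c_V}^2$ in the form \eqref{eq:Self-normalized Statistics}, I would set $\alpha = 2$ and $T(\bX) = \frac{n}{n-1}\sum_{1\leq i<j\leq n}(X_i-X_j)^2$, which indeed satisfies $T(\bzero)=0$ as required. Since $\Gamma(2)=1$ and $\lambda^{\alpha-1}=\lambda$, Corollary~\ref{col:generalized Brown_iid} gives at once
\[
    \E_F \widehat{c_V}^2 = \intzeroinf \lambda\, L^n(\lambda)\, \E_{F^{(\lambda)}}(T(\bX))\, d\lambda + r\,\pro^n(X_1=0),
\]
so only the tilted expectation $\E_{F^{(\lambda)}}(T(\bX))$ remains to be computed.

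The key step is this combinatorial evaluation. The sum $\sum_{1\leq i<j\leq n}(X_i-X_j)^2$ is a U-statistic with symmetric kernel $h(x,y)=(x-y)^2$, so by the U-statistic simplification recorded in the remark after Corollary~\ref{col:generalized Brown_iid}, and because the tilting \eqref{eq:Titled} preserves the i.i.d.\ structure, each of the $\binom{n}{2}$ pairs contributes equally under $F^{(\lambda)}$. Using the elementary identity $\E_{F^{(\lambda)}}(X_1-X_2)^2 = 2\,\VV {\rm ar}_{F^{(\lambda)}}(X_1)$, I obtain
\[
    \E_{F^{(\lambda)}}(T(\bX)) = \frac{n}{n-1}\cdot \binom{n}{2}\cdot 2\,\VV {\rm ar}_{F^{(\lambda)}}(X_1) = n^2\,\VV {\rm ar}_{F^{(\lambda)}}(X_1),
\]
which upon substitution yields \eqref{equ: E_widehat_c_V^2)}.

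Finally I would divide by $c_V^2 = \VV {\rm ar}_F(X_1)/\E_F^2(X_1)$ to obtain the ratio formulae. For \eqref{equ: hatR_V} this merely carries the factor $\E_F^2(X_1)/\VV {\rm ar}_F(X_1)$ through the integral while leaving the boundary term as $r\,\pro^n(X_1=0)/c_V^2$. For part (ii), substituting $\VV {\rm ar}_{F^{(\lambda)}}(X_1)=g(\lambda)\,\VV {\rm ar}_F(X_1)$ cancels the $\VV {\rm ar}_F(X_1)$ in the denominator against the one introduced by $g(\lambda)$, leaving exactly \eqref{equ: simplified hatR_V}. I do not expect a genuine analytic obstacle here: the main theorem has already dispatched the $n$-dimensional integral, and the only points demanding care are the pair-counting and the identity $\E(X_1-X_2)^2=2\,\VV {\rm ar}(X_1)$ under the tilted measure; everything else is substitution.
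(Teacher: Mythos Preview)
Your proposal is correct and essentially identical to the paper's own proof: the paper also applies the main formula with $\alpha=2$ and $T(\bX)=\frac{n}{n-1}\sum_{i<j}(X_i-X_j)^2$, reduces the tilted expectation via the same pair-counting to $\frac{n^2}{2}\,\E_{F^{(\lambda)}}(X_1-X_2)^2 = n^2\,\VV{\rm ar}_{F^{(\lambda)}}(X_1)$, and then divides by $c_V^2$ (noting that part (ii) follows as in Theorem~\ref{thm: Gini Expectation and Ratio}(ii)). The only cosmetic difference is that the paper cites Theorem~\ref{thm: generalized Brown} directly rather than its i.i.d.\ corollary.
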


\begin{proof}
    Here we only prove part (i) for brevity, and the proof of part (ii) is similar to that of Theorem \ref{thm: Gini Expectation and Ratio} (ii).
    
    Using Theorem \ref{thm: generalized Brown} with $T=\frac{n}{n-1}\sum\limits_{1\leq i< j \leq n}(X_i-X_j)^2$ and $\alpha =2$, we obtain that 

    \begin{align}
        \E_F \lpar \widehat{c_V}^2 \rpar & =  \intzeroinf \lambda \, L^n(\lambda) \E_{F^{(\lambda)}}(T(\bX)) d\lambda + r\,\pro^n(X_1 = 0) \nonumber \\
        & = \intzeroinf \lambda \, L^n(\lambda) \E_{F^{(\lambda)}}\lbr \frac{n}{n-1}\sum\limits_{1\leq i< j \leq n}(X_i-X_j)^2 \rbr d\lambda + r\,\pro^n(X_1 = 0) \nonumber \\
        & = \frac{n^2}{2} \intzeroinf \lambda \, L^n(\lambda) \E_{F^{(\lambda)}}\lbr (X_1-X_2)^2 \rbr d\lambda + r\,\pro^n(X_1 = 0) \nonumber \\
        & = n^2 \intzeroinf \lambda \, L^n(\lambda) \VV {\rm ar}_{F^{(\lambda)}}(X_1) d\lambda + r\,\pro^n(X_1 = 0) \nonumber.
    \end{align}
\end{proof}

\section{Applications and Numerical Results}
\label{sec: applications}
In this section, we present several applications of the theorems to specific families of distributions, including both analytical and numerical results. Notably, analytical results are rare, as the integral in \eqref{eq:main theorem} is generally not tractable and must be evaluated numerically. Additional numerical results are provided in the appendix.

\subsection{Bias analysis}
\label{ssec: bias}

\subsubsection{Unbiasedness of $\hat{G}$ for Gamma distribution}
\label{sssec: Proof Baydil's Theorem}

\indent

In this section we illustrate applying Theorem \ref{thm: Gini Expectation and Ratio} and \ref{thm: Gini 2nd moment} to $Gamma(\alpha,\beta)$ distribution defined by the following density:
\begin{equation}
\label{PDF of Gamma}
    f(y) = 
        \frac{\beta^{-\alpha}}{\Gamma(\alpha)}y^{\alpha-1}e^{-\frac{y}{\beta}},\quad y\geq 0.
\end{equation}
It is known that $\E X = \frac{\alpha}{\beta}$ and $L(\lambda) = (\lambda+1)^{-\alpha}$. Furthermore,   McDonald and Jensen (1979 \cite{mcdonald1979analysis}) provided the formula for the Gini Mean Difference and Gini coefficient for Gamma distribution:
\begin{equation}
    \label{eq: GMD of gamma}
    GMD(\alpha,\beta) := \E |X_1 -X_2| = \frac{2\Gamma(\alpha+\frac{1}{2})}{\sqrt{\pi}\Gamma(\alpha)\beta},
\end{equation} 
\begin{equation}
\label{popu Gini of Gamma}
    G(\alpha) = \frac{\Gamma(\alpha+\frac{1}{2})}{\sqrt{\pi}\Gamma(\alpha+1)}.
\end{equation}
Baydil et al (2025 \cite{banu2023}) showed that $\widehat{G}$ is an unbiased estimator for $G$ under Gamma distribution, which became one of the motivations of our study on the small sample bias of $\widehat{G}$ for other distributions. With the help of exponential tilting, we are able to provide an alternative and simple proof for the unbiasedness, based on Theorem \ref{thm: Gini Expectation and Ratio}:

\begin{corollary}[Unbiasedness of $\widehat{G}$ under Gamma distribution]
    \label{cor: Baydil's Theorem}
    For $X_i \widesim{i.i.d.} Gamma(\alpha,\beta)$, $1\leq i \leq n$, $\alpha,   \beta >0$, we have $\E \widehat{G}={G}$.
\end{corollary}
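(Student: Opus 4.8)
The plan is to apply part (ii) of Theorem \ref{thm: Gini Expectation and Ratio}, which already reduces the ratio $R = \E_F \widehat{G}/G$ to a single integral involving only $g(\lambda) = GMD(F^{(\lambda)})/GMD(F)$ and the Laplace transform. Two structural features of the Gamma law make every ingredient explicit. First, since the Gamma density is continuous, $\pro(X_1 = 0) = 0$, so the boundary term $r\,\pro^n(X_1=0)/G$ in \eqref{equ: R for Gini} vanishes identically and the arbitrary constant $r$ plays no role. Second, the exponential tilting of a Gamma law stays inside the Gamma family, which is exactly what makes $g(\lambda)$ computable in closed form without any $n$-dimensional integral.

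Concretely, writing the rate-parametrized density $f(y) \propto y^{\alpha-1} e^{-\beta y}$, the tilted density $f^{(\lambda)}(y) \propto e^{-\lambda y} f(y) \propto y^{\alpha-1} e^{-(\beta+\lambda) y}$ is again Gamma, with the same shape $\alpha$ and rate $\beta + \lambda$. Because the Gini mean difference is proportional to the scale, i.e. to the reciprocal of the rate, by \eqref{eq: GMD of gamma}, this gives at once $g(\lambda) = \beta/(\beta+\lambda)$; and since $\widehat{G}$ and $G$ are both scale-invariant, I may as well set $\beta = 1$, so that $g(\lambda) = (1+\lambda)^{-1}$, $L(\lambda) = (1+\lambda)^{-\alpha}$ and $\mu_F = \alpha$. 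Substituting these into \eqref{equ: R for Gini} collapses the problem to one elementary integral:
\[
    R = n\mu_F \intzeroinf g(\lambda)\, L^n(\lambda)\, d\lambda = n\alpha \intzeroinf (1+\lambda)^{-n\alpha - 1}\, d\lambda = n\alpha \cdot \frac{1}{n\alpha} = 1,
\]
so that $\E \widehat{G} = G$. Equivalently, one can route the same computation through part (i), where the explicit constant $\frac{2\Gamma(\alpha+\frac12)}{\sqrt{\pi}\,\Gamma(\alpha)}$ from \eqref{eq: GMD of gamma} factors out of the integral and directly reproduces $G(\alpha)$ in \eqref{popu Gini of Gamma}.

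The only genuine content here is the closure-under-tilting observation and the resulting form of $g(\lambda)$; once $g$ is identified the integral is immediate, since the exponent $-n\alpha - 1$ integrates in a single step. I expect the main thing to get right is the bookkeeping of the parametrization — in particular that tilting shifts the \emph{rate} rather than the shape, and that $GMD$ scales linearly in $1/(\beta+\lambda)$ — because a reciprocal or sign slip there would propagate into the exponent of the integrand and break the cancellation. The scale-invariance reduction to $\beta = 1$ is the cleanest way to remove any such ambiguity.
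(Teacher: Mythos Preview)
Your proposal is correct and follows essentially the same route as the paper: reduce to $\beta=1$ by scale invariance, observe that the exponential tilting of $Gamma(\alpha,1)$ is $Gamma(\alpha,1+\lambda)$ so that $g(\lambda)=(1+\lambda)^{-1}$, and then apply part (ii) of Theorem~\ref{thm: Gini Expectation and Ratio} with $L(\lambda)=(1+\lambda)^{-\alpha}$ to evaluate $R = n\alpha \intzeroinf (1+\lambda)^{-n\alpha-1}\,d\lambda = 1$. Your explicit remark that $\pro(X_1=0)=0$ kills the boundary term is a small clarification the paper leaves implicit, but otherwise the arguments coincide.
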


\begin{proof}
    Without loss of generality we assume $\beta=1$, since $\beta X_i \widesim{i.i.d.} Gamma(\alpha,1)$ for $X_i \widesim {i.i.d.} \Gamma(\alpha,\beta)$. 
    By \eqref{eq: GMD of gamma} we have $\E |X_1-X_2| = \frac{2\Gamma(\alpha+\frac{1}{2})}{\sqrt{\pi}\Gamma(\alpha)}$ and the exponential tiltings are given by 
    \[
        \frac{dF^{(\lambda)}(x)}{dx}\propto x^{\alpha-1}e^{-x}e^{-\lambda x}=x^{\alpha-1}e^{-(1+\lambda)x}
    \]
    so they follow $\Gamma(\alpha, 1+\lambda)$ distributions.
    Hence $\E_{F^{(\lambda)}}|X_1-X_2| =  (1+\lambda)^{-1} \E |X_1 - X_2|$, again by \eqref{eq: GMD of gamma}. According to part (ii) of Theorem \ref{thm: Gini Expectation and Ratio} and the fact that $L(\lambda) = (\lambda+1)^{-\alpha}$, we have
    \begin{align}
        R   =   \frac{\E\widehat{G}}{G} = \alpha \intzeroinf n \frac{1}{1+\lambda} L^n(\lambda) d\lambda   =   \alpha n \intzeroinf (\lambda + 1)^{-\alpha n -1}d\lambda   =  1.
    \end{align}
\end{proof}

\subsubsection{Pareto distribution}
\label{sssec: pareto}

Pareto (1898 \cite{pareto1898}) first observed that the income distribution can be approximated by a power law, later known as the Pareto distribution. Mitzenmacher (2003 \cite{mitzenmacher2003}) provided an interesting summary and bibliography on the application of Pareto distribution and log-normal distribution in economics, finance, computer science, biology, chemistry and astronomy. There are many variants of the Pareto distribution in the literature (e.g. \cite{arnold2008}), among which we use the two-parameter version:
\begin{equation}
    \label{equ: Pareto pdf}
    f(x) = \frac{\alpha x_m^\alpha}{x^{\alpha+1}}\boldsymbol{1}_{[x_m,\infty)},
\end{equation}
where $\alpha>1$ and $x_m>0$ are two parameters. If we assume that $\alpha>1$, the distribution has finite expectation. And because that $x_m>0$ is a scaling parameter (i.e., $k X_1\sim$ Pareto$(\alpha, k x_m)$), it is enough to study the Pareto$(\alpha,1)$ distribution. In this case, we have the mean $\mu = \E X_1 = \frac{\alpha}{\alpha-1}$, the Gini coefficient $G(\alpha) = \frac{1}{2\alpha-1}$, and the Laplace transform
\begin{equation}
    \label{equ: Laplace of Pareto}
    L(\lambda) = \E e^{-\lambda X} = \alpha \lambda^\alpha \Gamma(-\alpha, \lambda),   \forall \lambda>0,
\end{equation}
where $\Gamma(-\alpha, \lambda)$ is the upper incomplete Gamma function:
\begin{equation}
    \label{equ: incomplete Gamma function upper}
    \Gamma(-\alpha, \lambda) : =  \int\limits_\lambda^\infty x^{-\alpha-1}e^{-x}dx.
\end{equation}

Though Pareto$(\alpha,1)$ is in the exponential family, its corresponding exponential tilting is no longer a Pareto variable. However, we can still obtain the density function $f^{(\lambda)}(x)$, of $\Tilde{X}^{(\lambda)}$:
\begin{equation}
    \label{equ: pdf tilting Pareto}
    f^{(\lambda)}(x) =  \frac{\lambda^{-\alpha} x^{-\alpha-1}}{\Gamma(-\alpha, \lambda)} e^{-\lambda x},\quad x\geq 1,
\end{equation}
which coincides formally with a $Gamma(-\alpha,\lambda^{-1})$ density (usually for Gamma distribution both parameters should be positive).

By equation \eqref{equ: Gini Expectation} in Theorem \ref{thm: Gini Expectation and Ratio}, we have 
\begin{equation}
    \label{equ: expectation Gini Pareto}
    \E (\widehat{G}) = \frac{n}{2}\intzeroinf \int_1^\infty \int_1^\infty
    |x-y| \alpha^n \Gamma(-\alpha, \lambda)^{n-2} \lambda^{\alpha(n-2)} e^{-2\lambda(x+y)} (xy)^{-\alpha-1}
    dxdy) d\lambda
\end{equation}
And the ratio $R$ is then given by $R=(2\alpha-1)\E (\widehat{G})$.

Figure \ref{fig: Gini Pareto} visualizes the Gini coefficient, $\mathbb{E}(\widehat{G})$, and the ratio $R = \mathbb{E}(\widehat{G})/G$ for Pareto$(\alpha,1)$ distributed observations, considering various sample sizes $n$ and parameter $\alpha$. The figure clearly illustrates the downward bias of $\widehat{G}$, which becomes more pronounced when $\alpha$ and $n$ are smaller.

\begin{figure}[ht]
        \centering
	\includegraphics[width=14cm]{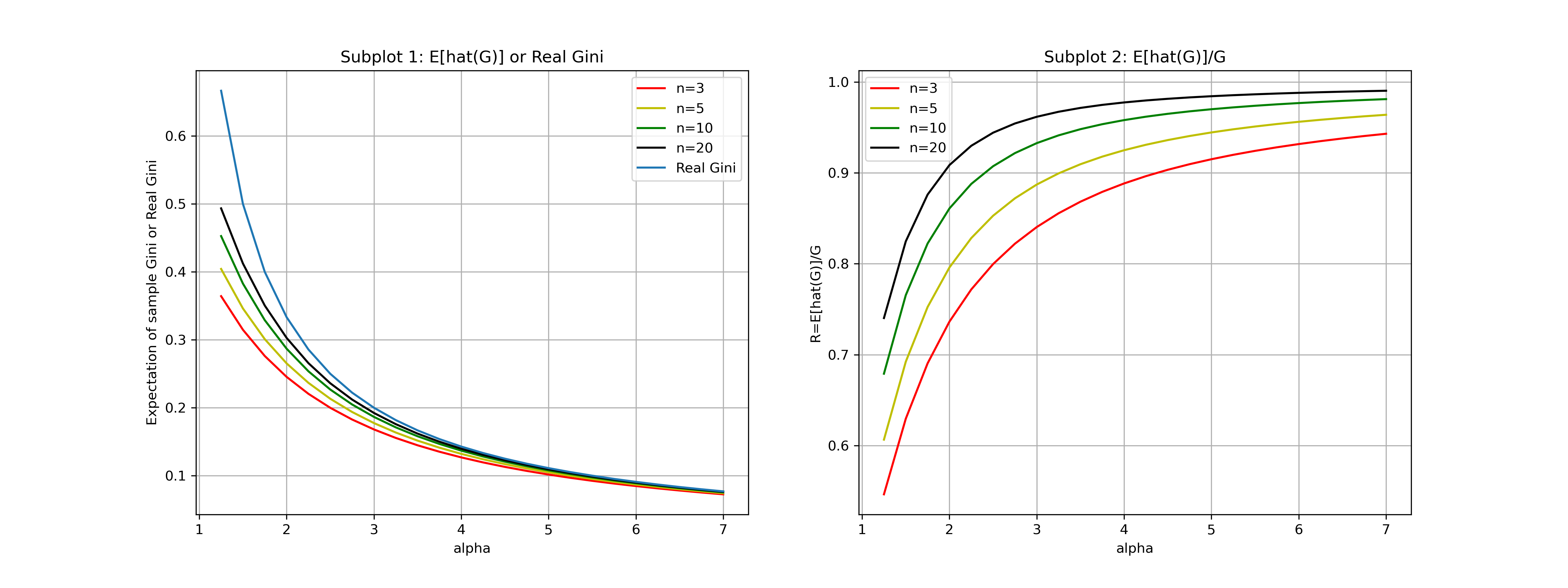}
	\caption{The Gini coefficient, $\E(\widehat{G})$ and $R$ of Pareto$(\alpha,1)$, with sample sizes $n=3,5,10$ and $20$. the x-axis represents the shape parameter $\alpha$.}
	\label{fig: Gini Pareto}
\end{figure}

\subsubsection{A novel debiasing method}
\label{sssec: debiasing}
As a direct application of the bias computed in the previous section, a natural approach is to use it for debiasing $\widehat{G}$. Specifically, we construct a new estimator, $\widehat{G}^{\text{debiased}}$, by subtracting an estimate of the bias from $\widehat{G}$, utilizing the bias formula provided in Theorem \ref{thm: Gini Expectation and Ratio}. This method can be readily extended to other statistics of the form \eqref{eq:Self-normalized Statistics}, provided that the underlying distribution is known.  

In this section, we focus on the Gini coefficient for illustrative purposes. A key challenge in implementing this approach is that the true population distribution is generally unknown. Consequently, a natural strategy is to estimate the distribution parameter—using, for example, the maximum likelihood estimator (MLE) or the method of moments estimator (MoM)—and substitute it into the bias formula before adjusting $\widehat{G}$. In this section, we demonstrate this debiasing procedure and compute the bias of such estimators for the Pareto distribution with density
\footnote{As we discussed earlier, $x_m$ is a scale parameter and doesn't affect the Gini coefficient, so we can set it to 1 without loss of generality.}
\[
    f(x) = \frac{\alpha }{x^{\alpha+1}}\boldsymbol{1}_{\{ x \geq 1 \}}.
\]
We compare the bias of the following estimators of Gini coefficient:
\begin{itemize}[noitemsep, topsep=0pt]
    \item classical sample Gini $\hat{G}$ defined in \eqref{Def of sample Gini HAT},
    \item debiased sample Gini using MLE: $\hat{G}^{MLE-debiased} = \hat{G} - bias(\hat{\alpha}^{MLE})$, where $bias(\alpha)$ is the bias of $\hat{G}$ calculated from Theorem \ref{thm: Gini Expectation and Ratio} numerically, and $\hat{\alpha}^{MLE}  := \frac{n}{\sum_{i=1}^n \log(X_i)}$ is the MLE of $\alpha$,
    \item Debiased sample Gini using MoM:
    $\hat{G}^{MoM-debiased} = \hat{G} - bias(\hat{\alpha}^{MoM})$, where $\hat{\alpha^{MoM}} = \frac{\bar{X}}{\bar{X}-1}$ is the method of moments (MoM) estimate of $\alpha$.
\end{itemize}
For comparison, we also compute the following two estimators of $G$ by inserting the MLE and MoM of $\alpha$ into the theoretical value $G(\alpha)=(2\alpha-1)^{-1}$: 
\begin{itemize}[noitemsep, topsep=0pt]
    \item plug-in estimator using MLE: $\hat{G}^{MLE}:= G(\alpha^{MLE}) = (2\alpha^{MLE} -1)^{-1}$,
    \item plug-in estimator using MoM: $\hat{G}^{MoM}:= G(\alpha^{MoM}) = (2\alpha^{MoM} -1)^{-1}$.
\end{itemize}

\begin{figure}[ht]
  \centering  
  \subfigure{\includegraphics[width=0.45\textwidth]{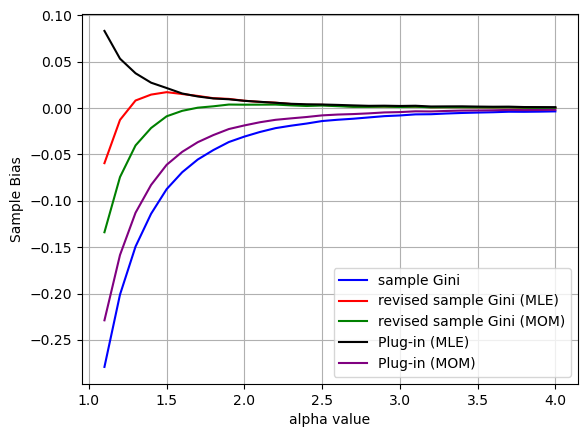}}
  \subfigure{\includegraphics[width=0.45\textwidth]{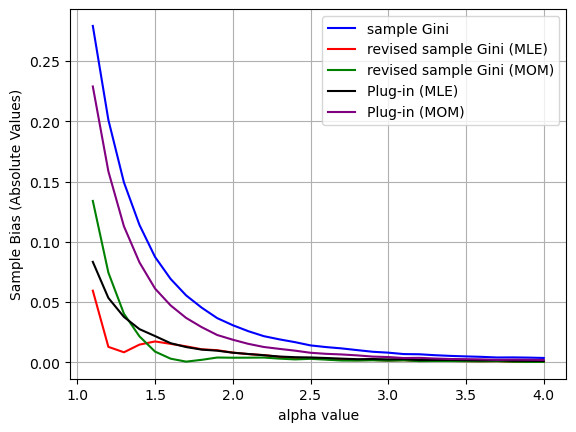}}
  \subfigure{\includegraphics[width=0.45\textwidth]{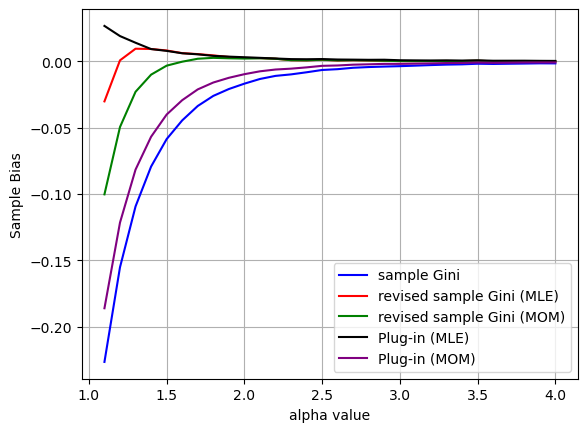}}
  \subfigure{\includegraphics[width=0.45\textwidth]{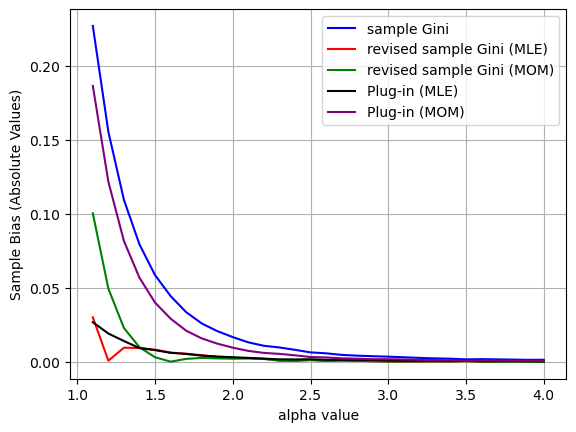}}
  \caption{Bias comparison of the five gini estimators for Pareto distribution, plotted against true value of $\alpha$. Top-left: bias for n=20 observations. Top-right: absolute bias for n=20. Bottom-left: bias for n=50. Bottom-right: absolute bias for n=50.}
  \label{Fig: Bias comparison of debiased estimators for Pareto distribution}
\end{figure}

Note that our debiasing method cannot eliminate the bias entirely, unless we know the true parameter and insert it into the bias function. Nonetheless, Figure \ref{Fig: Bias comparison of debiased estimators for Pareto distribution} compares the bias and its absolute value for the five aforementioned Gini estimators as a function of the true parameter $\alpha$, for sample sizes $n = 20$ and $n = 50$. Evidently, the plain Gini estimator, $\widehat{G}$, exhibits the highest bias in all cases, followed by the method-of-moments (MoM) plug-in estimator, $\widehat{G}^{\text{MoM}}$. The remaining three estimators—$\widehat{G}^{\text{MoM-debiased}}$, $\widehat{G}^{\text{MLE-debiased}}$, and the maximum likelihood plug-in estimator $\widehat{G}^{\text{MLE}}$—demonstrate similar bias performance. Among them, $\widehat{G}^{\text{MLE-debiased}}$ and $\widehat{G}^{\text{MLE}}$ perform slightly better when $\alpha$ is close to one, corresponding to extremely heavy-tailed distributions.

\subsection{Variance: Gamma distribution}
\label{ssec: variance of Ghat under Gamma}
For specific distributions, the variance of $\widehat{G}$  can be simplified based on Theorem \ref{thm: Gini 2nd moment}. In this section we provide an explicit formula for $\VV {\rm ar}(\widehat{G})$ for $Gamma(\alpha,\beta)$ distribution. The quantities in Definition \ref{def: xi} can be expressed as (WLOG assume $\beta=1$ since $\widehat{G}$ is scale invariant.)
\begin{align}
\xi_0 = \frac{4\Gamma^2(\alpha+\frac12)}{\pi\Gamma^2(\alpha)},\;\;\;\;\xi_1 := \E |X_1-X_2||X_1-X_3|,\;\;\;\;\xi_2 = 2 \alpha
\end{align}

\begin{corollary}
\label{SECOND MOMENT+VARIANCE}
    Given $X_i \widesim{i.i.d.} Gamma(\alpha,\beta)$, $1\leq i \leq n$, $\alpha,   \beta >0$, 
    \begin{equation}
    \label{equ: SECOND MOMENT}
        \E (\widehat{G}^2) = \frac{1}{(n-1)(\alpha n+1)} 
        + \frac{n-2}{\alpha(n-1)(\alpha n +1)}\xi_1 
        + \frac{(n-2)(n-3)}{\alpha(n-1)(\alpha n+1)}\frac{\Gamma^2(\alpha+\frac12)}{\pi \Gamma^2(\alpha)}
    \end{equation}
    and subsequently,
    \begin{equation}
    \label{equ: VARIANCE}
        \VV {\rm ar}(\widehat{G}) =
        \frac{1}{(n-1)(\alpha n+1)} 
        + \frac{(n-2)\,\xi_1 }{\alpha(n-1)(\alpha n +1)}
        - \frac{(1+4\alpha)n - (6\alpha+1)}{(n-1)(n\alpha +1)}\frac{\Gamma^2(\alpha+\frac12)}{\pi\alpha^2 \Gamma^2(\alpha)}
    \end{equation}
\end{corollary}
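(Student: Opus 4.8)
The plan is to specialize the second-moment formula in Theorem \ref{thm: Gini 2nd moment}(ii) to the Gamma family and then subtract the square of the (known, unbiased) mean. As in the proof of Corollary \ref{cor: Baydil's Theorem}, I would first reduce to $\beta=1$ by scale invariance of $\widehat{G}$, note that $\pro(X_1=0)=0$ so the $r^2$ term drops, and recall that the exponential tilting of $Gamma(\alpha,1)$ is $Gamma(\alpha,(1+\lambda)^{-1})$, i.e. a $(1+\lambda)^{-1}$ rescaling of the original variable, with Laplace transform $L(\lambda)=(1+\lambda)^{-\alpha}$.

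The key simplification is that the three ratios $h_i(\lambda)=\xi_i(F^{(\lambda)})/\xi_i(F)$ all coincide. Since $\xi_0,\xi_1,\xi_2$ in Definition \ref{def: xi} are each expectations of a product of two absolute differences of i.i.d. copies, every $\xi_i$ is homogeneous of degree two under a rescaling of the data; hence tilting multiplies each $\xi_i$ by $(1+\lambda)^{-2}$, so that $h_0(\lambda)=h_1(\lambda)=h_2(\lambda)=(1+\lambda)^{-2}$. (For $h_0$ and $h_2$ this also follows directly from the scalings $GMD\propto\beta$ in \eqref{eq: GMD of gamma} and $\VV{\rm ar}=\alpha\beta^2$.) Consequently every $\lambda$-integral in Theorem \ref{thm: Gini 2nd moment}(ii) collapses to the single integral $\intzeroinf \lambda\,(1+\lambda)^{-\alpha n-2}\,d\lambda$, which I would evaluate by the substitution $u=1+\lambda$ to obtain $\frac{1}{\alpha n}-\frac{1}{\alpha n+1}=\frac{1}{\alpha n(\alpha n+1)}$.

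Substituting this common value together with $\xi_2=2\alpha$ and $\xi_0=4\Gamma^2(\alpha+\frac12)/(\pi\Gamma^2(\alpha))$ into the three terms of Theorem \ref{thm: Gini 2nd moment}(ii), the factors of $n$ and $\alpha$ telescope term by term to yield \eqref{equ: SECOND MOMENT}; here $\xi_1$ is simply carried along symbolically, since it admits no convenient closed form for the Gamma law. For the variance I would invoke unbiasedness (Corollary \ref{cor: Baydil's Theorem}) to write $(\E\widehat{G})^2=G(\alpha)^2=\Gamma^2(\alpha+\frac12)/(\pi\alpha^2\Gamma^2(\alpha))$, using $\Gamma(\alpha+1)=\alpha\Gamma(\alpha)$, and subtract it from \eqref{equ: SECOND MOMENT}.

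The only genuine bookkeeping obstacle is combining the $\xi_0$-term of $\E(\widehat{G}^2)$ with $-(\E\widehat{G})^2$: after factoring out $\Gamma^2(\alpha+\frac12)/(\pi\alpha^2\Gamma^2(\alpha))$ and placing both over the common denominator $(n-1)(\alpha n+1)$, the numerator becomes $\alpha(n-2)(n-3)-(n-1)(\alpha n+1)$, which I would expand and simplify to $-[(1+4\alpha)n-(6\alpha+1)]$, giving exactly the last term of \eqref{equ: VARIANCE}. This polynomial identity is elementary but is precisely where an arithmetic slip is most likely, so I would verify it by matching the coefficients of $n^2$, $n$, and the constant term separately.
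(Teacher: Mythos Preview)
Your proposal is correct and follows essentially the same route as the paper: reduce to $\beta=1$, observe that tilting is a $(1+\lambda)^{-1}$ rescaling so all three $h_i(\lambda)$ collapse to $(1+\lambda)^{-2}$, evaluate the single integral $\intzeroinf \lambda(1+\lambda)^{-\alpha n-2}\,d\lambda = \frac{1}{\alpha n(\alpha n+1)}$, and then subtract $G(\alpha)^2$ via unbiasedness. The paper is slightly terser in the variance step (it just says ``follows from unbiasedness''), whereas you spell out the polynomial identity $\alpha(n-2)(n-3)-(n-1)(\alpha n+1)=-[(1+4\alpha)n-(6\alpha+1)]$, which is correct and a useful check.
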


\begin{proof}
    Again without loss of generality we assume $\beta=1$, and $F^{(\lambda)}(x) \sim Gamma(\alpha,1+\lambda)\stackrel{L}{=}(1+\lambda)^{-1}Gamma(\alpha,1)$.
    Therefore we have that
    \begin{align*}
        h_k(\lambda) = \frac{\xi_k(F^{(\lambda)})}{\xi_k(F)}
        = \frac{1}{(1+\lambda)^2}\frac{\xi_k(F)}{\xi_k(F)}
        =\frac{1}{(1+\lambda)^2}, \quad k=0,1,2.
    \end{align*}
    Therefore, by Corollary \ref{thm: Gini 2nd moment}, we have that
    \begin{equation*}
        \E \widehat{G}^2 = \frac{1}{4(n-1)^2}\E \lpar \sum\limits_{1\leq i \neq j \leq n} |X_i - X_j| \rpar ^2 \intzeroinf \frac{\lambda}{(1+\lambda)^2}L^n(\lambda)  d\lambda,
    \end{equation*}
    where
    \begin{fleqn}[\parindent]
        \begin{align}
            \intzeroinf \frac{\lambda}{(1+\lambda)^2}L^n(\lambda)& = 
            \intzeroinf \lambda(1+\lambda)^{-\alpha n-2} d\lambda \nonumber \\
            & = \intzeroinf (1+\lambda)^{-\alpha n-1} d\lambda - \intzeroinf (1+\lambda)^{-\alpha n-2} d\lambda \nonumber \\
            & = \frac{1}{\alpha n} -  \frac{1}{\alpha n+1}  \nonumber = \frac{1}{(\alpha n)(\alpha n+1)} \label{step: will be used in SCV}
        \end{align}
    \end{fleqn}
    By combining the expectation (see equation \eqref{Open square}) and the integration, we prove the equation \eqref{equ: SECOND MOMENT}. And equation \eqref{equ: VARIANCE} follows as a result of the unbiasedness of $\widehat{G}$.
\end{proof}

\section{Concluding Remarks}
\label{sec:discussion}

In this paper, we proposed a unified and scalable formula (\ref{thm: generalized Brown}) for the moments of a class of normalized statistics for non-negative i.i.d. observations, which take the form of 
\[
    V(\bX):=\frac{T(\bX)}{\bar{X}^\alpha}.
\]
Our formula significantly simplifies the typically cumbersome computations of the expectation of such ratios, which often involve $n$-layered integrals, reducing them to a small number of integrals. This enables exact analyses of the bias and variance for a wide range of statistics. Notably, in cases where $T = \sum_{i,j} h(X_i, X_j)$—such as the Gini coefficient and the squared coefficient of variation (SCV)—our formula requires evaluating only four integrals, either numerically or in closed form.  

The key technique underlying our approach is the gamma density trick \eqref{equ: important identity}, which effectively handles the summation in the denominator. We demonstrated the utility of our formula by deriving explicit expressions for the expectation (\ref{thm: Gini Expectation and Ratio}) and variance (\ref{thm: Gini 2nd moment}) of the Gini coefficient, as well as the expectation of the SCV (\ref{thm: E(SCV)}). Furthermore, we provided numerical results for these formulae across several commonly used distributions, including the Gamma and negative binomial distributions.  

Based on these computations, we proposed a novel debiasing method (Section \ref{sssec: debiasing}) and demonstrated its superior performance in reducing bias for the Gini coefficient. Additional numerical experiments for a broader range of distributions—including Bernoulli, Pareto, log-normal, inverse Gaussian, and Poisson distributions—are provided in the appendices.

It is worth noting that our formula allows for the assignment of an artificial constant to the ratio when all observations are zero. While this scenario is highly unlikely in practice, it has implications for the theoretical analysis of moments. The specific choice of $r$ depends on domain-specific considerations and falls beyond the scope of our study.  

Furthermore, our main formula (\ref{thm: generalized Brown}) has the potential to extend beyond the applications presented in Sections \ref{sec: applications} and the Appendices, which are intended primarily for illustrative purposes. Additionally, future research may uncover more analytical results beyond the unbiasedness of $\widehat{G}$ established in Section \ref{sssec: Proof Baydil's Theorem}.

\acks
We wish to thank Professor Joel E. Cohen and Professor Rustam Ibragimov for inspiring and helpful discussions. We thank Victor K. de la Pena for helpful editing suggestions.  

\fund  There are no funding bodies to thank relating to this creation of this article.

\competing There were no competing interests to declare which arose during the preparation or publication process of this article.



%
%
%

\bibliographystyle{APT}
\footnotesize
\bibliography{ref}

\end{document}